\DeclareFontFamily{U}{wncy}{}
\DeclareFontShape{U}{wncy}{m}{n}{<->wncyr10}{}
\DeclareSymbolFont{mcy}{U}{wncy}{m}{n}
\DeclareMathSymbol{\Sha}{\mathord}{mcy}{"58}
\newtheorem{thm}{Theorem}[section]
\newtheorem{lemma}[thm]{Lemma}
\newtheorem{prop}[thm]{Proposition}
\theoremstyle{definition}
\newtheorem{defn}[thm]{Definition}
\newtheorem{remark}[thm]{Remark}
\newtheorem{assumption}[thm]{Assumption}
\newtheorem{notation}[thm]{Notation}
\newcommand{\bbC}{{\mathbb C}}
\newcommand{\bbF}{{\mathbb F}}
\newcommand{\bbQ}{{\mathbb Q}}
\newcommand{\bbR}{{\mathbb R}}
\newcommand{\bbZ}{{\mathbb Z}}
\newcommand{\calO}{{\mathcal O}}
\DeclareMathOperator{\Aut}{Aut}
\DeclareMathOperator{\Br}{Br}
\DeclareMathOperator{\Gal}{Gal}
\DeclareMathOperator{\GL}{GL}
\DeclareMathOperator{\HH}{H}
\DeclareMathOperator{\inv}{inv}
\DeclareMathOperator{\Norm}{Norm}
\DeclareMathOperator{\Sel}{Sel}
\DeclareMathOperator{\rk}{rk}
\newcommand{\eps}{\varepsilon}
\newcommand{\ol}{\overline}
\title{Calculating the Tate local pairing for any odd prime number}
\author{Erik Visse}
\thanks{I am grateful to Rachel Newton and David Holmes for supervising the writing of my master's thesis, to Hendrik Lenstra and Ronald van Luijk for their helpful comments on said thesis, and again to Rachel Newton for proofreading an earlier draft of this paper.}
\address{Mathematisch Instituut, Universiteit Leiden, Niels Bohrweg 1, 2333CA, Leiden, the Netherlands}
\email{h.d.visse@math.leidenuniv.nl}
\urladdr{http://pub.math.leidenuniv.nl/~vissehd/}
\date{\today}
\subjclass[2010]{11G05, 11G07, 16H05}
\begin{document}

\begin{abstract}
Fisher and Newton have given an explicit description of the Tate local pairing associated with the 3-torsion of an elliptic curve. The present paper summarizes the work from the author's master's thesis~\cite{mscthesis} and gives an explicit formula for any odd prime $p$, thereby extending the work of Fisher and Newton.
\end{abstract}

\maketitle

\section{Introduction}

Throughout this paper, let $(E,\calO)$ be an elliptic curve over a number field $k$. The Cassels--Tate pairing associated with $E$ is an alternating pairing
\begin{displaymath}
\langle\ ,\ \rangle:\Sha(E/k)\times \Sha(E/k)\to \bbQ/\bbZ.
\end{displaymath}
There are several equivalent definitions for this pairing, which can for example be found in~\cite{Poonen-Stoll}*{Section 3}. Given as a pairing on $\Sha(E/k)\times\Sha(E/k)$, for any prime number $p$, the Cassels--Tate pairing extends a pairing on $\Sel^{p}(E/k)\times \Sel^{p}(E/k)$ having a kernel that contains $E(k)/pE(k)$.

Cassels introduced this pairing in \cite{Cassels-original} and in the same paper studied its kernel. In~\cite{Cassels-p=2}, he proved that for $p=2$ the pairing can be written as a sum of local pairings. 

One main result of Fisher and Newton~\cite{Fisher-Newton}*{Theorem 1.3} allows one to calculate a restriction of the pairing $\langle\ ,\ \rangle$ as an infinite sum of local pairings $[\ ,\ ]_v$, one for each place $v$ of $k$. The restriction is to one side of the domain being $p$-torsion where $p$ is an odd prime number, so it is useful in calculating the pairing on $\Sel^p(E/k)\times\Sel^p(E/k)$. The symbol $[\ ,\ ]_v$ is called the Tate local pairing and is defined in the next section. 

Both Cassels' method and that of Fisher and Newton involve the Hilbert norm residue symbol. The latter method is necessarily more involved since the Cassels--Tate pairing is alternating and the Hilbert norm residue symbol is symmetric. Only for $p=2$ do these two properties coincide. 

The other main result of Fisher and Newton~\cite{Fisher-Newton}*{Theorem 2.1} deals with their local computation. Part of the theorem gives an explicit element of a local field that is needed to run the calculation, but only for $p=3$. The present paper extends their method to give an explicit element for an odd prime number $p$.

\subsection{Applications}

The pairing on $\Sel^p(E/k)\times \Sel^p(E/k)$ having a one-sided kernel $N$ that contains the subgroup $E(k)/pE(k)$ allows an application in bounding the Mordell--Weil rank of $E(k)$ from the inclusions
\begin{displaymath}
E(k)/pE(k)\subseteq N \subseteq \Sel^p(E/k)
\end{displaymath}
and hence from the inequalities
\begin{displaymath}
\#(E[p](k))\cdot p^{\rk E(k)} \leq \#N \leq \#\Sel^p(E/k).
\end{displaymath}
The second inclusion (and therefore also the second inequality) is strict if and only if $\Sha(E/k)[p]$ is non-trivial.

Apart from their role in calculating the Cassels--Tate pairing as explained in~\cite{Fisher-Newton}, the Tate local pairings themselves can be used to study Brauer--Manin obstructions to weak approximation, see~\cite{Ieronymou-Skorobogatov_odd_order} and~\cite{Newton_tBM}.

This paper is structured as follows. Section 2 contains the necessary setup from the literature; section 3 contains the formula from Fisher and Newton and our main result for any odd prime $p$ in Theorem \ref{DeltaPQ_nice}. Indeed, from this point on we will always assume that $p$ is an odd prime number. 

\section{The setup}
In order to make sense of the notation and the statements, we need several constructions and results from~\cite{Explicit-n-descent-I-Algebra}*{Section 3}. This section reviews the ones that are most important for our purposes.

\subsection{The Tate local pairing}
\begin{defn}\label{localpairing_def}
For $v$ a place of $k$ and $K$ any finite extension of $k_v$, one defines a composite pairing 
\small
\begin{displaymath}
(\ ,\ )_K:\HH^1(K,E[p])\times \HH^1(K,E[p])\stackrel{\cup}{\to} \HH^2(K,E[p]\otimes E[p])\stackrel{e_p}{\to} \HH^2(K,\mu_p)\stackrel{\inv_K}{\to}\tfrac1p\bbZ/\bbZ
\end{displaymath}
\normalsize
composed of the cup product $\cup$, the Weil pairing $e_p$ and the Hasse local invariant.
\end{defn}

\begin{remark}{\color{white}This is some filler text.}
\begin{enumerate}
\item Since both the cup product and the Weil pairing are antisymmetric (since $p$ is odd), the pairing $(\ ,\ )_K$ is symmetric.
\item The group $\HH^2(K,\mu_p)$ is a subgroup of $\HH^2(K,\ol{K}^\times)=\Br(K)$, the Brauer group of $K$. The Brauer group of an algebraically closed field is trivial, that of $\bbR$ is of order two; so neither $\Br(\bbC)$ nor $\Br(\bbR)$ have non-trivial $p$-torsion and hence the pairings $(\ ,\ )_\bbC$ and $(\ ,\ )_\bbR$ are trivial and we need only consider finite places $v$.
\end{enumerate}
\end{remark}

The following definition works for any field extension of $k$, but in the remainder, we will only apply it for finite extensions of completions of $k$.

\begin{defn}\label{defR}
Let $K$ be a field extension of $k$. For maps (just of sets) $f:E[p](\ol K)\to \ol K$, a Galois action is defined as follows. For any element $\sigma\in G_K=\Gal(\ol K/K)$ and $P\in E[p](\ol K)$, define $(\sigma f)(P):=\sigma (f(\sigma^{-1}P))$. The set of $G_K$-invariant maps is notated by
\begin{displaymath}
R_K:=\operatorname{Map}_K(E[p](\ol K),\ol K).
\end{displaymath}
Dropping $G_K$-invariance, the notation $\ol{R_K}:=R_K\otimes_K\ol{K}=\operatorname{Map}(E[p](\ol K),\ol K)$ is used.
\end{defn}

From these algebras $R_K$ and $\ol{R_K}$ we will take some specific elements for use in our formula. They are described below.

\begin{prop}\label{H90}~\cite{Explicit-n-descent-I-Algebra}*{p.136}\label{exactpartial}
Define two homomorphisms of groups $w: E[p](\ol K)\to \ol{R_K}^\times$ by $w(S)(-):=e_p(S,-)$ and $\partial: \ol{R_K}^\times\to (\ol{R_K}\otimes_{\ol{K}}\ol{R_K})^\times$ by $(\partial \alpha)(T_1,T_2):=\frac{\alpha(T_1)\alpha(T_2)}{\alpha(T_1+T_2)}$. The sequence
\begin{displaymath}
0\to E[p](\ol K)\stackrel{w}{\longrightarrow}\ol {R_K}^\times\stackrel{\partial}{\longrightarrow}(\ol{R_K}\otimes_{\ol K}\ol{R_K})^\times
\end{displaymath}
is exact. Moreover, we have $H^1(K,\ol{R_K}^\times)=0$.
\end{prop}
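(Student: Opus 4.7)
The plan is to prove the two claims separately. For the exact sequence, I would first observe that injectivity of $w$ follows from the non-degeneracy of the Weil pairing on $E[p]$: if $e_p(S,T)=1$ for all $T \in E[p](\ol{K})$, then $S=0$. The inclusion $\operatorname{im}(w) \subseteq \ker(\partial)$ is a direct expansion of definitions and amounts to bilinearity of $e_p$ in the second argument, i.e.\ $\frac{e_p(S,T_1)\, e_p(S,T_2)}{e_p(S,T_1+T_2)}=1$.

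The reverse inclusion $\ker(\partial) \subseteq \operatorname{im}(w)$ is the heart of the exactness claim. The condition $\partial \alpha = 1$ translates into $\alpha(T_1+T_2) = \alpha(T_1)\alpha(T_2)$, i.e.\ $\alpha : E[p](\ol{K}) \to \ol{K}^\times$ is a group homomorphism; since the source is annihilated by $p$, the image lies in $\mu_p(\ol{K})$. I would then invoke non-degeneracy of the Weil pairing once more: it identifies $w$ with a map $E[p](\ol{K}) \to \operatorname{Hom}(E[p](\ol{K}),\mu_p)$ between two abelian groups of the same finite order $p^2$, hence an isomorphism, giving $\alpha = w(S)$ for some $S$.

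For the vanishing $\HH^1(K, \ol{R_K}^\times) = 0$, the plan is to exploit the product structure $\ol{R_K}^\times = \prod_{P \in E[p](\ol{K})} \ol{K}^\times$, on which $G_K$ acts by permuting the factors according to its action on $E[p]$ while acting on each $\ol{K}^\times$ in the natural way. Decomposing $E[p](\ol{K})$ into $G_K$-orbits $\Omega$ reduces the claim to the vanishing of $\HH^1(K, \operatorname{Map}(\Omega,\ol{K})^\times)$ for each orbit. For a fixed $\Omega$, I would pick $P \in \Omega$, set $L=K(P)$ so that $G_L = \operatorname{Stab}_{G_K}(P)$, and identify this factor (via evaluation at $P$) with the coinduced module $\operatorname{Coind}_{G_L}^{G_K} \ol{K}^\times$; Shapiro's lemma then reduces the computation to $\HH^1(L, \ol{K}^\times)$, which vanishes by Hilbert 90.

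The only real subtlety will be verifying that the twisted action $(\sigma f)(P) = \sigma(f(\sigma^{-1} P))$ from Definition~\ref{defR} genuinely produces the coinduced-module structure on each orbit component, so that Shapiro's lemma applies as advertised. Once this bookkeeping is dispatched, the two ingredients (non-degeneracy of $e_p$ and Hilbert 90 via Shapiro) do all the work, and no serious technical obstacle remains.
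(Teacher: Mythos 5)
Your argument is correct, and it is essentially the standard proof: the paper itself states this proposition without proof, citing \cite{Explicit-n-descent-I-Algebra}, where exactness is obtained exactly as you describe from non-degeneracy and bilinearity of the Weil pairing (noting that $\partial\alpha=1$ forces $\alpha$ to be a homomorphism into $\mu_p$, hence of the form $e_p(S,-)$ by a counting argument), and the vanishing of $\HH^1(K,\ol{R_K}^\times)$ follows from the orbit decomposition, Shapiro's lemma for the finite-index stabilizers, and Hilbert 90. The bookkeeping you flag does work out: $f\mapsto\bigl(g\mapsto g(f(g^{-1}P))\bigr)$ is a $G_K$-isomorphism from $\operatorname{Map}(\Omega,\ol K^\times)$ with the twisted action onto the module coinduced from $G_{K(P)}$, so no gap remains.
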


This proposition allows to define the following two homomorphisms.

\begin{defn}\label{gammadef} Take any $[\xi]\in \HH^1(K,E[p])$. Then there exists a $\gamma\in \ol{R_K}^\times$ such that
\begin{displaymath}
w(\xi(\sigma))=\sigma(\gamma)/\gamma
\end{displaymath}
holds for all $\sigma\in G_K$. From such $\gamma$, define $\alpha:=\gamma^p$ and $\rho:=\partial\gamma$. Furthermore define homomorphisms 
\begin{align*} 
w_{1,K}:\HH^1(K,E[p])&\to R_K^\times/(R_K^\times)^p,\\
[\xi] &\mapsto \alpha(R_K^\times)^p
\end{align*} 
and 
\begin{align*} 
w_{2,K}:\HH^1(K,E[p])&\to (R_K\otimes_K R_K)^\times/\partial R_K^\times,\\
[\xi] &\mapsto \rho\partial R_K^\times.
\end{align*}
\end{defn}

These $w_{1,K}$ and $w_{2,K}$ are well-defined (e.g. their images are $G_K$-invariant) and injective. Injectivity is given by~\cite{Explicit-n-descent-I-Algebra}*{Lemmas 3.1 and 3.2}.

Finally, one can define the Tate local pairing in the form that we want to have it.

\begin{defn}
For $v$ a finite place of $k$ and $K$ a finite extension of $k_v$, the pairing $[\ ,\ ]_K$ is the pairing on the image of $w_{1,K}$ induced by the pairing $(\ ,\ )_K$ given in Definition \ref{localpairing_def} and it is called the Tate local pairing. For $K=k_v$, we also write $[\ ,\ ]_v$.
\end{defn}

Given any $[\xi]\in \HH^1(K,E[p])$, the $\gamma$ in Definition \ref{gammadef} is not unique, but $\alpha$ and $\rho$ are. Following~\cite{Fisher-Newton}, we use some terminology referring to the elements $\alpha$ and $\rho$ (and sometimes implicitly $\gamma$).

\begin{defn}\label{compatible}
For $[\xi]\in \HH^1(K,E[p])$, we call a pair of elements $\alpha\in R_K^\times$ and $\rho\in (R_K\otimes_K R_K)^\times$ compatible representatives for $[\xi]$ if there exists a $\gamma\in \ol{R_K}^\times$ such that the following hold:
\begin{enumerate}
\item $w(\xi(\sigma))=\sigma\gamma/\gamma$,
\item $\gamma(\calO)=1$,
\item $\gamma^p=\alpha$, and
\item $\partial\gamma=\rho$.
\end{enumerate}
\end{defn}

Notice that the requirement $\gamma(\calO)=1$ is free, since we may always scale $\gamma$ by an element of $\ol{K}^\times$. 

\begin{notation}
Whenever we use the notation $\alpha$, $\rho$ and/or $\gamma$, we will always implicitly assume that they form a set of compatible representatives as above.
\end{notation}

\subsection{The new multiplication}
The advantage of introducing $[\ ,\ ]_K$ on the image of $w_{1,K}$ as opposed to studying the pairing $(\ ,\ )_K$ directly, is that the algebra $R_K$ can be endowed with an alternative multiplication allowing the formulae that will be treated in the next section. In this section we consider the alternative multiplication, writing $\eps_p(S,T)$ for $e_p(S,T)^{1/2}$ for convenience.

For every $T\in E[p](\ol K)$ the set $\ol{R_K}$ contains an indicator function $\delta_T$ given by 
\begin{displaymath}
\delta_T(S):=\begin{cases}1& \text{ if }S=T,\\0 & \text{ otherwise}\end{cases},
\end{displaymath}
which is an element of $R_K$ if and only if $T\in E[p](K)$ holds.

\begin{defn}
For $\rho$ in $(R_K\otimes_K R_K)^\times$ a multiplication $*_\rho$ on $R_K$ is defined by setting
\begin{equation*}
(f*_\rho g)(T)=\sum_{\substack{T_1+T_2=T\\ T_1,\, T_2\in E[p](\ol K)}}\eps_p(T_1,T_2)\rho(T_1,T_2)f(T_1)g(T_2).
\end{equation*}
We write $R_{K,\rho}$ for $(R_K,+,*_\rho)$. The multiplication depends on $\rho$, but we will write $*$ for $*_\rho$ where no confusion is likely to arise. Indeed, $R_{K,\rho}$ is a (non-commutative) ring with multiplicative unit $\delta_\calO$.
\end{defn}

With this alternative multiplication, the indicator functions satisfy some useful properties.

\begin{prop}\label{ind_mult}
For any $T,S\in E[p](\ol K)$ and $\sigma\in \Gal(\ol K/K)$ the following hold:
\begin{enumerate}
\item $\sigma(\delta_T)=\delta_{\sigma T}$,\label{int_mult0}
\item $\delta_T*\delta_S=\eps_p(T,S)\rho(T,S)\delta_{T+S}$,\label{ind_mult1}
\item $\delta_T*\delta_S=e_p(T,S)\delta_S*\delta_T$,
\item $\delta_T^{*p}=\alpha(T)\delta_\calO$,
\item $\delta_T$ is invertible with inverse 
\begin{displaymath}
\delta_T^{-1}=\frac{1}{\gamma(T)\gamma(-T)}\delta_{-T}.
\end{displaymath}
\end{enumerate}
\end{prop}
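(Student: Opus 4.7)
The plan is to verify each of the five claims by direct computation from the definition of $*_\rho$ and the compatibility relations $\alpha=\gamma^p$, $\rho=\partial\gamma$, $\gamma(\calO)=1$. Items (1)--(2) will be immediate, after which the remaining three all reduce to repeated application of (2) and bookkeeping in $\eps_p$.

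For (1), I would simply expand $(\sigma\delta_T)(P)=\sigma(\delta_T(\sigma^{-1}P))$ and observe that $\sigma^{-1}P=T$ is equivalent to $P=\sigma T$, while $\sigma$ fixes the values $0,1\in K$. For (2), the sum in the definition of $f *_\rho g$ collapses to the single term $T_1=T$, $T_2=S$, which contributes only when the evaluation point equals $T+S$, leaving exactly $\eps_p(T,S)\rho(T,S)\delta_{T+S}$. For (3), I would apply (2) to both sides, noting that $\rho=\partial\gamma$ is manifestly symmetric in its two arguments and that, by antisymmetry of $e_p$, the product $\eps_p(T,S)\eps_p(S,T)$ squares to $e_p(T,S)e_p(S,T)=1$; comparing with the square, $\eps_p(T,S)^2/\eps_p(S,T)^2=e_p(T,S)^2$, the ratio $\eps_p(T,S)/\eps_p(S,T)$ must equal $e_p(T,S)$ under the standing convention for $\eps_p$.

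For (4), I would prove by induction on $n$ the formula
\begin{displaymath}
\delta_T^{*n} \;=\; \frac{\gamma(T)^n}{\gamma(nT)}\,\prod_{i=1}^{n-1}\eps_p(T,iT)\,\delta_{nT},
\end{displaymath}
the step from $n$ to $n+1$ being a single application of (2) together with the identity $\rho(nT,T)=\gamma(nT)\gamma(T)/\gamma((n+1)T)$. Specialising to $n=p$ uses $pT=\calO$, the normalisation $\gamma(\calO)=1$, and the definition $\alpha(T)=\gamma(T)^p$. For (5), I would then write $\delta_T^{-1}=\alpha(T)^{-1}\delta_T^{*(p-1)}$, substitute the formula at $n=p-1$, and cancel to $1/[\gamma(T)\gamma(-T)]\cdot\delta_{-T}$; alternatively, one multiplies $\delta_T$ by the proposed inverse directly, using $\rho(T,-T)=\gamma(T)\gamma(-T)$.

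The main obstacle is handling the root-of-unity factors coming from $\eps_p$. Specifically, the inductive formula for $\delta_T^{*n}$ carries a product $\prod_{i=1}^{n-1}\eps_p(T,iT)$ that must collapse to $1$ for (4) and (5) to take the clean stated form, and the single factor $\eps_p(T,-T)$ appearing in the verification of the inverse must also be trivial. Both of these reduce to the fact that $e_p(T,iT)=e_p(T,T)^i=1$ on linearly dependent arguments, and both then rely on the convention, tacit in the notation $\eps_p(S,T)=e_p(S,T)^{1/2}$, that the chosen square root of $1$ is $+1$. Once this convention is pinned down, the remaining computations are essentially mechanical.
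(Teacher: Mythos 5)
Your proposal is correct and follows the same route as the paper, whose proof consists only of the remark that (1) is immediate from the Galois action, (2) is a direct computation, and (3)--(5) follow; your write-up simply fills in the details (the symmetry of $\rho=\partial\gamma$, the inductive formula for $\delta_T^{*n}$, and the observation that $\eps_p$ is trivial on linearly dependent arguments because squaring is a bijection on $\mu_p$ for odd $p$). The convention you flag is indeed the intended one, namely that $\eps_p=e_p^{(p+1)/2}$ takes values in $\mu_p$, so $\eps_p(T,iT)=1$.
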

\begin{proof}
Property \ref{int_mult0} follows by the definition of the Galois action on $R_K$. One proves \ref{ind_mult1} by direct computation; the rest follows.
\end{proof}

\begin{remark}
The alternative multiplication $*_\rho$ can also be defined on $\ol{R_K}$, defining $\ol{R_{K,\rho}}$ similarly. The fact that $R_{K,\rho}$ and hence $\ol{R_{K,\rho}}$ are central simple algebras is important for the proof of Theorem \ref{realthm_non_rat} but this will not be used in the present paper.
\end{remark}

\section{Where the expression appears}
With a symmetric bilinear form with codimension of characteristic different from 2, one can associate a unique quadratic form. We write $q_K$ for the quadratic form associated with $[\ ,\ ]_K$, i.e. $q_K(x)=\frac{1}{2}[x,x]_K$. 

\begin{prop}~\cite{Fisher-Newton}*{Lemma 2.12}\label{extension}
Let $v$ be a finite place of $k$ and let $k_v\subset K\subset L$ be two finite extensions. Then the following relation holds
\begin{displaymath}
q_L=[L:K]q_K.
\end{displaymath}
\end{prop}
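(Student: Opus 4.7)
The plan is to descend the identity through $w_{1,K}$ and $w_{1,L}$ to the level of Galois cohomology, where the statement reduces to the standard compatibility of local invariants, $\inv_L\circ\mathrm{res}_{L/K}=[L:K]\cdot\inv_K$.

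First I would verify that $w_1$ is compatible with restriction. Let $[\xi]\in \HH^1(K,E[p])$ have compatible representatives $(\alpha,\rho,\gamma)$ as in Definition \ref{compatible}. Since $\ol{R_K}$ and $\ol{R_L}$ are, as sets, both equal to $\operatorname{Map}(E[p](\ol{k_v}),\ol{k_v})$, and since $G_L\subseteq G_K$, the cocycle relation $w(\xi(\sigma))=\sigma\gamma/\gamma$ valid for every $\sigma\in G_K$ continues to hold for every $\sigma\in G_L$. Hence $(\alpha,\rho,\gamma)$ also serves as a set of compatible representatives for $\mathrm{res}_{L/K}[\xi]\in \HH^1(L,E[p])$, and the element $\alpha\in R_K^\times$, viewed in $R_L^\times$, represents $w_{1,L}(\mathrm{res}_{L/K}[\xi])$. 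In other words, the image of $w_{1,K}$ sits inside that of $w_{1,L}$ via the natural inclusion $R_K\hookrightarrow R_L$, so the statement $q_L=[L:K]q_K$ is a meaningful identity on the common source.

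Next I would show that $(\mathrm{res}_{L/K}\xi,\mathrm{res}_{L/K}\eta)_L=[L:K]\,(\xi,\eta)_K$ for all $[\xi],[\eta]\in \HH^1(K,E[p])$. The cup product, the Weil pairing (regarded as a change-of-coefficients map) and the formation of $\HH^\bullet(-,\mu_p)$ are all functorial with respect to restriction, so the composite in Definition \ref{localpairing_def} sends $(\mathrm{res}_{L/K}\xi,\mathrm{res}_{L/K}\eta)$ to $\inv_L\circ\mathrm{res}_{L/K}$ applied to the class $e_p(\xi\cup\eta)\in \HH^2(K,\mu_p)$. The scaling factor $[L:K]$ then enters from the local class field theory compatibility stated above.

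Combining the two steps, for $x=w_{1,K}[\xi]$, which coincides with $w_{1,L}(\mathrm{res}_{L/K}[\xi])$ under the identification of the first paragraph,
\[
q_L(x)=\tfrac12[x,x]_L=\tfrac12(\mathrm{res}_{L/K}\xi,\mathrm{res}_{L/K}\xi)_L=\tfrac{[L:K]}{2}(\xi,\xi)_K=[L:K]\,q_K(x),
\]
where $\tfrac12$ makes sense in $\tfrac1p\bbZ/\bbZ$ because $p$ is odd. The only non-formal ingredient is the restriction-compatibility of $w_1$ in the first paragraph; the remainder is functorial bookkeeping combined with a standard fact of local class field theory, so I anticipate no serious obstacle.
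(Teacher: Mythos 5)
Your proposal is correct and follows the same route as the proof of the cited result (\cite{Fisher-Newton}*{Lemma 2.12}, which the paper invokes without reproving): the pairing $(\ ,\ )_K$ is functorial under restriction because the cup product and the change-of-coefficients map $e_p$ are, and the factor $[L:K]$ comes from the local class field theory identity $\inv_L\circ\mathrm{res}_{L/K}=[L:K]\inv_K$. Your preliminary check that compatible representatives for $[\xi]$ remain compatible representatives for $\mathrm{res}_{L/K}[\xi]$, so that $q_L$ and $q_K$ are genuinely being compared on the same element $\alpha$, is exactly the bookkeeping needed to make the statement on images of $w_{1,K}$ and $w_{1,L}$ meaningful.
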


\begin{remark}
Proposition \ref{extension} allows us to freely take field extensions of degree coprime to $p$, do our calculations over the bigger field and then divide the outcome by the extension degree. This freedom is the reason why in the previous section we more generally defined everything over field extensions of $k_v$ instead of just over $k_v$ itself.
\end{remark}

\subsection{A suitable field extension}

\begin{prop}\label{PandQ}
For any finite place $v$ of $k$ and any finite extension $K$ of $k_v$, there exists a finite extension $F$ of $K$ that has degree coprime to $p$ such that we have exactly one of two cases:
\begin{enumerate}
\item $E[p](\ol K)=E[p](F)$, or\label{PandQ1}
\item there exist points $P$ and $Q$ that generate $E[p](\ol{K})$ such that $P$ is defined over $F$ and $Q$ is defined over a cyclic field extension $F\subset L$ of degree $p$ and such that the Galois group $\Gal(L/F)=\langle\sigma\rangle$ acts on $E[p](\ol K)$ by $\sigma(P)=P$ and $\sigma(Q)=Q+P$.\label{PandQ2}
\end{enumerate}
\end{prop}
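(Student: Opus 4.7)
The plan is to exploit the Galois action on $E[p]$ together with Sylow theory applied to the image of the mod-$p$ Galois representation. Consider the continuous representation $\rho\colon G_K \to \Aut(E[p](\ol K)) \cong \GL_2(\bbF_p)$, set $H := \rho(G_K)$, a finite subgroup of $\GL_2(\bbF_p)$, and let $K' := K(E[p])$, so that $\Gal(K'/K)\cong H$. Since $|\GL_2(\bbF_p)| = p(p-1)^2(p+1)$, any Sylow $p$-subgroup of $H$ has order $1$ or $p$. Fix such a Sylow subgroup $P_H \le H$ and define $F := (K')^{P_H}$; then $F/K$ is finite of degree $[H:P_H]$, which is coprime to $p$ by choice of Sylow.

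If $P_H$ is trivial, then $F = K'$, and hence $E[p](\ol K) = E[p](F)$, giving case (\ref{PandQ1}).

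Otherwise $|P_H| = p$, so $P_H = \langle\sigma\rangle$ for some $\sigma$ of order $p$. Since $\sigma^p = I$, its minimal polynomial divides $T^p - 1 = (T-1)^p$ over $\bbF_p$; combined with $\sigma \ne I$ and $\dim E[p] = 2$, this forces the characteristic polynomial to be $(T-1)^2$. Jordan normal form over $\bbF_p$ then supplies a basis $\{P,Q\}$ of $E[p](\ol K)$ on which $\sigma$ acts by $\sigma(P)=P$ and $\sigma(Q) = Q+P$. Because $P$ is fixed by all of $P_H$, we have $P \in E[p](F)$. Set $L := F(Q)$; the stabilizer of $Q$ in $P_H$ is trivial, since $\sigma^i(Q) = Q + iP$ equals $Q$ only when $p \mid i$. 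Consequently $L = K'$, so $\Gal(L/F) = P_H = \langle\sigma\rangle$ is cyclic of order $p$ acting exactly as prescribed in case (\ref{PandQ2}). As $Q \notin E[p](F)$, the two cases are mutually exclusive for the $F$ just constructed.

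The only genuinely non-formal step is the classification of order-$p$ elements in $\GL_2(\bbF_p)$, and it hinges on the coincidence between the characteristic of the coefficient field and the order we care about: the absence of non-trivial $p$-th roots of unity in $\ol{\bbF_p}$ forces every element of order $p$ to be unipotent and, not being the identity, to be conjugate to $\bigl(\begin{smallmatrix} 1 & 1 \\ 0 & 1 \end{smallmatrix}\bigr)$. Everything else reduces to standard Sylow theory and the Galois correspondence applied inside the finite Galois extension $K(E[p])/K$.
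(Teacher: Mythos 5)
Your proof is correct and follows essentially the same route as the paper: pass to the image of the mod-$p$ representation in $\GL_2(\bbF_p)$, take the fixed field of a Sylow $p$-subgroup (necessarily trivial or of order $p$ since $p\,\|\,\#\GL_2(\bbF_p)$), and in the nontrivial case put the generator in the standard unipotent form to produce $P$ and $Q$. The only cosmetic difference is that you identify $\sigma$ with $\bigl(\begin{smallmatrix}1&1\\0&1\end{smallmatrix}\bigr)$ via the minimal polynomial and Jordan form, whereas the paper invokes conjugacy of Sylow $p$-subgroups; you also explicitly check the dichotomy is exclusive, which the paper leaves implicit.
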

\begin{proof}
By the action of $G_K=\Gal(\ol K/K)$ on $E[p](\ol K)$ we get a homomorphism 
\begin{equation*}
G_K\to \Aut(E[p](\ol K))\cong \GL_2(\bbF_p)
\end{equation*}
where we consider automorphisms of groups. The isomorphism is by choosing a basis. Let $L$ be the fixed field of the kernel of this map. Then $E[p](L)=E[p](\ol K)$ holds and we have maps
\begin{equation*} 
G_K\longrightarrow \Gal(L/K)\hookrightarrow \GL_2(\bbF_p).
\end{equation*}
Let $C\subset \Gal(L/K)$ be a Sylow-$p$-subgroup and let $F=L^C$ be the field fixed by $C$. Then we have $K\subseteq F\subseteq L$ and since $\#\GL_2(\bbF_p)=p(p-1)(p^2-1)$ is divisible by only a single factor of $p$ and $C$ is (isomorphic to) a subgroup of $\GL_2(\bbF_p)$, we have two cases: either $C$ is trivial or $C$ is cyclic of order $p$. In either case we have $p\nmid [F:K]$, so $F$ is a candidate field for this proposition.

In the first case we have $F=L$. This yields case \ref{PandQ1} from the statement.

In the second case we have $\Gal(L/F)\cong \bbZ/p\bbZ$. It is a basic fact that all Sylow-$p$-subgroups of a finite group are conjugates and we know that the subgroup
\begin{equation*} 
H=\left\langle \left(\begin{array}{cc} 1&1\\ 0&1\end{array}\right)\right\rangle\subset \GL_2(\bbF_p)
\end{equation*}
is a Sylow-$p$-subgroup. By application of an inner automorphism of $\GL_2(\bbF_p)$ and interpreting $\Gal(L/K)$ as a subgroup of $\GL_2(\bbF_p)$, we can thus identify $C$ with $H$.

Let $P\in E[p](L)$ correspond to the vector $(1,0)^t$ and $Q\in E[p](L)$ to the vector $(0,1)^t$. Then the element $\sigma\in \Gal(L/F)$ that corresponds to the given generator of $H$ yields $\sigma(P)=P$ and $\sigma(Q)=Q+P$. 
\end{proof}

\begin{assumption}\label{assumption_K}
From now on, we will always assume that our field $K$ is replaced by a degree coprime to $p$ field extension such that at least one non-trivial point of $E[p](\ol K)$ is defined over $K$ and name an arbitrary such point $P$. 
\end{assumption}

\begin{defn}
For $Q\in E[p](\ol K)$ chosen such that $P$ and $Q$ generate $E[p](\ol K)$, we write 
\begin{displaymath}
\Delta_{P,Q}=\delta_Q+\delta_{Q+P}+\ldots+\delta_{Q+(p-1)P}\in \ol{R_K}.
\end{displaymath}
\end{defn}

\begin{lemma}
For any $Q\in E[p](\ol K)$ that together with $P$ generates $E[p](\ol K)$, the element $\Delta_{P,Q}$ lies in $R_K$.
\end{lemma}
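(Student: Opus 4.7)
My goal is to verify that $\Delta_{P,Q}=\sum_{j=0}^{p-1}\delta_{Q+jP}$ is fixed by every $\sigma\in G_K$. The first step is to apply Proposition~\ref{ind_mult}(1), which gives $\sigma(\delta_T)=\delta_{\sigma T}$, and to use $\sigma P=P$ (which holds because Assumption~\ref{assumption_K} places $P$ in $E[p](K)$) to obtain
\[
\sigma(\Delta_{P,Q})=\sum_{j=0}^{p-1}\delta_{\sigma(Q+jP)}=\sum_{j=0}^{p-1}\delta_{\sigma Q+jP}.
\]
The problem therefore reduces to showing that the coset $Q+\langle P\rangle\subset E[p](\overline K)$ is preserved setwise by $G_K$, i.e.\ that $\sigma Q-Q\in\langle P\rangle$ for every $\sigma\in G_K$.

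I would extract this from Proposition~\ref{PandQ}, which describes how $K$ was replaced. In case~\ref{PandQ1} the Galois action on $E[p](\overline K)$ is already trivial and the claim is immediate. In case~\ref{PandQ2} there is a distinguished basis $\{P,Q_0\}$ such that $G_K$ acts on $E[p](\overline K)$ through $\Gal(L/F)=\langle\sigma_0\rangle$ via $\sigma_0 P=P$, $\sigma_0 Q_0=Q_0+P$; equivalently, every $\sigma\in G_K$ is represented in the basis $\{P,Q_0\}$ by a power of the unipotent matrix $\left(\begin{smallmatrix}1&1\\0&1\end{smallmatrix}\right)$. Expanding a general $Q$ generating $E[p](\overline K)$ together with $P$ as $Q=aP+bQ_0$ with $b\in\bbF_p^\times$, and writing $\sigma=\sigma_0^c$, I compute
\[
\sigma Q=aP+b(Q_0+cP)=Q+bcP\in Q+\langle P\rangle,
\]
which is exactly what was required; in particular the map $j\mapsto j+bc$ permutes $\bbZ/p\bbZ$ and so the two sums in the display above coincide.

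The only mildly delicate point is the interpretation of Assumption~\ref{assumption_K}: to make the argument work one needs the conclusion of Proposition~\ref{PandQ} in full, namely that the image of $G_K\to\Aut(E[p](\overline K))\cong\GL_2(\bbF_p)$ lies in the unipotent subgroup generated by $\left(\begin{smallmatrix}1&1\\0&1\end{smallmatrix}\right)$, not merely in the stabiliser of $\langle P\rangle$ (otherwise a Galois element sending $Q_0\mapsto aP+bQ_0$ with $b\neq 1$ would move $Q_0$ off the coset $Q_0+\langle P\rangle$ and destroy invariance). This is precisely why the field replacement in Proposition~\ref{PandQ} was engineered to kill the part of the Galois action on $E[p](\overline K)/\langle P\rangle$; once that is in hand the lemma is nothing more than the one-line indicator-function calculation above.
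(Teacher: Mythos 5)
Your proof is correct and follows essentially the same route as the paper's: both arguments reduce the lemma to showing that every Galois element fixes $P$ and preserves the coset $Q+\langle P\rangle$ setwise, so that $\sigma$ merely permutes the indicator functions appearing in $\Delta_{P,Q}$. The only cosmetic difference is that you read off $\sigma Q\in Q+\langle P\rangle$ from the explicit unipotent matrix supplied by Proposition~\ref{PandQ}, whereas the paper writes $\sigma Q=mQ+nP$ and deduces $m=1$ from $\sigma^p=\mathrm{id}$; your closing remark that the unipotence of the Galois image (and not merely $P\in E[p](K)$) is genuinely needed is accurate.
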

\begin{proof}
The statement is obvious in situation \ref{PandQ1} of Proposition \ref{PandQ}. For any $Q\in E[p](L)$ as in situation \ref{PandQ2}, one applies $\sigma$ to the expression for $\Delta_{P,Q}$. The only remaining case is $E[p](\ol K)=E[p](L)$ as in situation \ref{PandQ2} but with $\sigma(Q)\neq Q+P$. 

Write $\sigma(Q)=mQ+nP$ for some $m,n\in \bbZ/p\bbZ$. Then we find
\begin{displaymath}
Q=\sigma^p(Q)=m^pQ+\sum_{i=0}^{p-1}m^i nP = mQ+\sum_{i=0}^{p-1}m^i nP
\end{displaymath}
and hence $m=1$. From $Q\notin E[p](K)$, we conclude $n\neq 0$ and hence $\gcd(n,p)=1$. Therefore the orbit of $Q$ is $\{Q,Q+P,\ldots,Q+(p-1)P\}$ and $\Delta_{P,Q}$ is invariant under $\Gal(L/K)$.
\end{proof}

\subsection{The results}
The content of the following theorem is functionally identical to that of~\cite{Fisher-Newton}*{Theorem 2.13}.

\begin{thm}\label{realthm_non_rat}
Let $P\in E[p](K)$ and $Q\in E[p](\ol K)$ be such that $P$ and $Q$ generate $E[p](\ol K)$ as an abelian group. For $\alpha\in w_{1,K}(\HH^1(K,E[p]))$, let $\rho\in(R_K\otimes_K R_K)^\times$ together with $\alpha$ form a pair of compatible representatives for some $[\xi]\in \HH^1(K,E[p])$. Write $\{\ ,\ \}_K$ for the Hilbert norm residue symbol of $K$ and $\iota_{P,Q}:\mu_p(L)\to \tfrac{1}{p}\bbZ/\bbZ$ for the group homomorphism determined by $e_p(Q,P)\mapsto \tfrac{1}{p}$. Then we have
\begin{equation*}
q_K(\alpha)= \begin{cases}\iota_{P,Q}\{\alpha(P),\Delta_{P,Q}^{*p}\}_K & \text{if}\ \Delta_{P,Q}^{*p}\neq 0,\\ 0 & \text{otherwise}.\end{cases}
\end{equation*}
\end{thm}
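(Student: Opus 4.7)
The strategy is to reinterpret both sides of the equality via the central simple algebra $R_{K,\rho}$. I would first invoke (from the preceding theory, and the remark following the definition of the new multiplication) the identity
\begin{displaymath}
q_K(\alpha) = \inv_K[R_{K,\rho}] \in \tfrac{1}{p}\bbZ/\bbZ.
\end{displaymath}
Proving this identity is a cochain-level computation that unwinds the definitions of the cup product, the Weil pairing $e_p$, and the Hasse invariant along the cocycle $\xi$ underlying a compatible triple $(\gamma, \alpha, \rho)$; the resulting $2$-cocycle is precisely the one that presents $R_{K,\rho}$ as a central simple algebra. After this reduction, the problem becomes to compute $\inv_K[R_{K,\rho}]$ explicitly as a Hilbert norm residue symbol.

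To do this, I would exhibit $R_{K,\rho}$ as a cyclic algebra generated by $\delta_P$ and $\Delta_{P,Q}$. Both elements lie in $R_K$: the first because $P \in E[p](K)$, the second by the preceding lemma. Proposition~\ref{ind_mult} gives $\delta_P^{*p} = \alpha(P)\,\delta_\calO$ and, summing the braiding relation $\delta_T * \delta_S = e_p(T,S)\,\delta_S * \delta_T$ over the $p$ summands in $\Delta_{P,Q}$ while using $e_p(P,P)=1$, one obtains
\begin{displaymath}
\delta_P * \Delta_{P,Q} = e_p(P,Q) \cdot \Delta_{P,Q} * \delta_P.
\end{displaymath}
From this braiding relation, the element $\Delta_{P,Q}^{*p}$ commutes with $\delta_P$ (the braiding scalar becomes $e_p(P,Q)^p = 1$) and trivially with $\Delta_{P,Q}$, so it lies in the center of the subalgebra generated by these two elements. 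A dimension count using the $p^2$ linearly independent monomials $\delta_P^{*i} * \Delta_{P,Q}^{*j}$ (each of which is a nonzero combination of $\delta_T$ supported on the coset $iP + jQ + \bbZ P$) shows that this subalgebra equals all of $R_{K,\rho}$, whose $K$-dimension is $\#E[p](\ol K) = p^2$. Consequently $\Delta_{P,Q}^{*p}$ is central in $R_{K,\rho}$ and hence a scalar multiple of $\delta_\calO$, which I identify with an element of $K$.

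When this scalar is nonzero, $R_{K,\rho}$ is isomorphic to the cyclic algebra with parameters $\alpha(P)$ and $\Delta_{P,Q}^{*p}$ relative to the primitive $p$-th root of unity $\zeta = e_p(P,Q)$; the standard identification of the Brauer class of such a cyclic algebra with a Hilbert norm residue symbol, combined with the symmetry of $\{\ ,\ \}_K$ for odd $p$ and the definition of $\iota_{P,Q}$, yields the claimed formula. When $\Delta_{P,Q}^{*p} = 0$, the element $\Delta_{P,Q}$ is a nonzero nilpotent in the central simple algebra $R_{K,\rho}$ of dimension $p^2$; this forces $R_{K,\rho} \simeq M_p(K)$, so $\inv_K[R_{K,\rho}] = 0$ and $q_K(\alpha) = 0$. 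The main obstacle is the careful verification of the identity $q_K(\alpha) = \inv_K[R_{K,\rho}]$ and the tracking of sign conventions when passing from the Brauer class of the cyclic algebra to the Hilbert symbol via $\iota_{P,Q}$; additional care is needed in the non-rational case of Proposition~\ref{PandQ}, where $Q$ is defined only over the cyclic extension $L$ of $K$, to ensure that the manipulations descend properly from $\ol{R_K}$ to $R_K$ and that the resulting scalar $\Delta_{P,Q}^{*p}$ truly lies in $K$.
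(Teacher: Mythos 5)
First, a caveat: the paper does not actually prove this theorem --- it is imported from Fisher--Newton (their Theorem 2.13), with only the remark that the central simple algebra structure of $R_{K,\rho}$ is the essential input to the proof. Your architecture, namely the identity $q_K(\alpha)=\inv_K[R_{K,\rho}]$ followed by a cyclic-algebra presentation of $R_{K,\rho}$ on the generators $\delta_P$ and $\Delta_{P,Q}$, is exactly the strategy of that cited proof, so the route is the intended one.

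There is, however, a genuine gap in your dimension count. The coset $iP+jQ+\bbZ P$ is just $jQ+\bbZ P$; it does not depend on $i$, so for fixed $j$ all $p$ monomials $\delta_P^{*i}*\Delta_{P,Q}^{*j}$ are supported on one and the same $p$-element coset, and disjointness of supports gives no linear independence. Worse, the independence genuinely fails when $\Delta_{P,Q}$ is not invertible (for instance when some power $\Delta_{P,Q}^{*j}$ vanishes), which is precisely the degenerate case the theorem must accommodate; there the subalgebra generated by $\delta_P$ and $\Delta_{P,Q}$ is proper and your deduction that $\Delta_{P,Q}^{*p}$ is central collapses. The repair reverses the order of your steps. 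Since $\delta_P^{*i}$ is a nonzero multiple of $\delta_{iP}$, the subalgebra $K[\delta_P]\cong K[x]/(x^p-\alpha(P))$ is \'etale of dimension $p$, hence self-centralizing by the double centralizer theorem; as $\Delta_{P,Q}^{*p}$ commutes with $\delta_P$ it must equal $\sum_i c_i\delta_{iP}$. If $\Delta_{P,Q}$ is invertible, commuting this against $\Delta_{P,Q}$ via $\Delta_{P,Q}*\delta_{iP}=e_p(Q,P)^i\,\delta_{iP}*\Delta_{P,Q}$ forces $c_i=0$ for $i\neq0$, so $\Delta_{P,Q}^{*p}=d\,\delta_\calO$ with $d\neq0$; only now does simplicity of the abstract cyclic algebra with parameters $(e_p(P,Q),\alpha(P),d)$ make its natural map to $R_{K,\rho}$ injective, hence an isomorphism by counting dimensions, after which your Hilbert-symbol identification applies. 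If $\Delta_{P,Q}$ is not invertible, then $R_{K,\rho}$ is not a division algebra and so is isomorphic to $M_p(K)$; the relation $\delta_P*\Delta_{P,Q}*\delta_P^{-1}=e_p(P,Q)\,\Delta_{P,Q}$ makes the eigenvalue multiset of the singular matrix $\Delta_{P,Q}$ stable under multiplication by a primitive $p$th root of unity, hence identically zero, so $\Delta_{P,Q}$ is nilpotent, $\Delta_{P,Q}^{*p}=0$, and both sides of the asserted formula vanish. Your reduction $q_K(\alpha)=\inv_K[R_{K,\rho}]$ is correctly flagged as the other substantial input; it is the period--index obstruction computation of O'Neil and of Cremona--Fisher--O'Neil--Simon--Stoll, cited rather than reproved in this paper as well.
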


It remains to find an expression for $\Delta_{P,Q}^{*p}$ as ~\cite{Fisher-Newton}*{Theorem 2.1} does only for the case $p=3$.

\begin{thm}\label{DeltaPQ_nice}
In the setting as in the previous theorem, we have
\begin{displaymath}
\Delta_{P,Q}^{*p} = \sum_{\substack{i_1,i_2,\ldots,i_p\in\bbZ/p\bbZ\\ p|\sum_\ell i_\ell}}e_p(Q,P)^{\sum_{\ell=1}^p\ell\cdot i_\ell}\prod_{\ell=1}^p\gamma(Q+i_\ell P)\delta_\calO.
\end{displaymath}
\end{thm}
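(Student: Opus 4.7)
The plan is to expand $\Delta_{P,Q}^{*p}$ after a convenient normalization, then apply a cyclic-shift argument to identify which terms survive. To that end, set $\tilde\delta_T:=\gamma(T)^{-1}\delta_T$ for each $T\in E[p](\ol K)$. Because $\rho=\partial\gamma$, Proposition~\ref{ind_mult}(\ref{ind_mult1}) collapses to the clean identity
\begin{displaymath}
\tilde\delta_T*\tilde\delta_S = \eps_p(T,S)\,\tilde\delta_{T+S},
\end{displaymath}
and the multiplicative unit becomes $\tilde\delta_\calO=\delta_\calO$ since $\gamma(\calO)=1$. Expanding the $p$-fold power yields
\begin{displaymath}
\Delta_{P,Q}^{*p}=\sum_{(i_1,\ldots,i_p)\in(\bbZ/p\bbZ)^p}\left(\prod_{\ell=1}^p\gamma(Q+i_\ell P)\right)\tilde\delta_{Q+i_1P}*\cdots*\tilde\delta_{Q+i_pP}.
\end{displaymath}

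A short induction using the bilinearity of $\eps_p$ gives
\begin{displaymath}
\tilde\delta_{T_1}*\cdots*\tilde\delta_{T_p} = \prod_{1\le i<j\le p}\eps_p(T_i,T_j)\,\tilde\delta_{T_1+\cdots+T_p}.
\end{displaymath}
Specialising to $T_\ell=Q+i_\ell P$ and using $pQ=\calO$, $\eps_p(Q,Q)=\eps_p(P,P)=1$, and $\eps_p(P,Q)=\eps_p(Q,P)^{-1}$, each pairwise factor becomes $\eps_p(Q,P)^{i_j-i_i}$, whence
\begin{displaymath}
\tilde\delta_{Q+i_1P}*\cdots*\tilde\delta_{Q+i_pP} = \eps_p(Q,P)^{\sum_{\ell}i_\ell(2\ell-p-1)}\,\tilde\delta_{NP}
\end{displaymath}
with $N:=\sum_\ell i_\ell$. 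Writing $\eps_p(Q,P)=e_p(Q,P)^{(p+1)/2}$ (the canonical square root in $\mu_p$), this element is a primitive $p$-th root of unity, so its exponent can be taken modulo $p$; multiplying by $(p+1)/2$ rewrites it as $M-N/2$, where $M:=\sum_\ell\ell\,i_\ell$ and $1/2$ denotes the inverse of $2$ modulo $p$. Each summand of $\Delta_{P,Q}^{*p}$ is therefore
\begin{displaymath}
\prod_{\ell=1}^p\gamma(Q+i_\ell P)\cdot e_p(Q,P)^{M-N/2}\,\tilde\delta_{NP}.
\end{displaymath}

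To finish, consider the cyclic action $\sigma\cdot(i_1,\ldots,i_p):=(i_p,i_1,\ldots,i_{p-1})$ on $(\bbZ/p\bbZ)^p$. Both $\prod_\ell\gamma(Q+i_\ell P)$ and $N$ are $\sigma$-invariant, while a direct computation shows $M\mapsto M+N\pmod p$. Since $p$ is prime, every $\sigma$-orbit has size $1$ or $p$, and the size-$1$ orbits are exactly the constant tuples, each of which satisfies $N\equiv 0\pmod p$. For a size-$p$ orbit with $N\not\equiv 0\pmod p$, summing over the orbit produces the factor $\sum_{k=0}^{p-1}e_p(Q,P)^{kN}=0$, so every such orbit drops out. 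The surviving tuples are precisely those with $p\mid N$; for them $N/2\equiv 0\pmod p$, the exponent reduces to $M$, and $\tilde\delta_{NP}$ reduces to $\delta_\calO$, yielding the claimed formula. The one real obstacle is the exponent bookkeeping — passing between $\eps_p$ and $e_p$, verifying that $(p+1)/2\cdot(2\ell-p-1)\equiv\ell-1/2\pmod p$, and confirming that $\sigma$ shifts $M$ by $N$ — after which the geometric-sum cancellation is immediate.
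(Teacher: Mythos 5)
Your proof is correct, and it reaches the paper's intermediate expression \eqref{DeltaPQ_not_nice} by essentially the same computation: your normalization $\tilde\delta_T=\gamma(T)^{-1}\delta_T$ is just a cleaner packaging of ``writing $\rho$ in terms of $\gamma$ and cancelling factors,'' and your summand $\prod_{\ell}\gamma(Q+i_\ell P)\,e_p(Q,P)^{M-N/2}\,\tilde\delta_{NP}$ coincides term-by-term with the paper's, since $e_p(Q,P)^{M-N/2}=\eps_p(Q,P)^{2M-N}=\eps_p(Q,P)^{\sum_\ell(2\ell-1)i_\ell}$ and $\tilde\delta_{NP}=\gamma(NP)^{-1}\delta_{NP}$. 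Where you genuinely diverge is the last step. The paper kills the terms with $\sum_\ell i_\ell\not\equiv 0\pmod p$ by appealing to Theorem \ref{realthm_non_rat}: since $\Delta_{P,Q}^{*p}$ is already known to lie in $K\delta_\calO$, the non-$\delta_\calO$ contributions must cancel. You instead prove the cancellation directly: group the tuples into orbits of the cyclic shift, note that the shift fixes $\prod_\ell\gamma(Q+i_\ell P)$, $N$ and $\tilde\delta_{NP}$ while translating $M$ by $N$, check that the only fixed tuples are the constant ones (which have $p\mid N$), and sum the geometric series $\sum_{k=0}^{p-1}e_p(Q,P)^{kN}=0$ for $N\not\equiv 0$ using that $e_p(Q,P)$ is a primitive $p$th root of unity. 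Your exponent bookkeeping ($(p+1)/2\cdot(2\ell-p-1)\equiv\ell-1/2\pmod p$ and $M\mapsto M+N$ under the shift) checks out. The trade-off: the paper's route is shorter but makes the theorem logically dependent on the central-simple-algebra input behind Theorem \ref{realthm_non_rat}; your route is self-contained, costs only some elementary combinatorics, and re-derives the fact that $\Delta_{P,Q}^{*p}\in K\delta_\calO$ as a byproduct rather than assuming it.
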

\begin{proof}
This can be done by direct calculation starting with determining $\Delta_{P,Q}^{*2}$, proceeding by induction, writing $\rho$ in terms of $\gamma$ and cancelling factors. Doing so, one arrives at an expression 
\begin{equation}\label{DeltaPQ_not_nice}
\Delta_{P,Q}^{*p} = \sum_{i_1,i_2,\ldots,i_p\in\bbZ/p\bbZ}\eps_p(Q,P)^{\sum_{\ell=1}^p(2\ell-1)i_\ell}\frac{\prod_{\ell=1}^p\gamma(Q+i_\ell P)}{\gamma((\sum_{\ell=1}^p i_\ell) P)}\delta_{(\sum_{\ell=1}^p i_\ell)P}
\end{equation}
which we need to show is equal to the expression given in the statement of the theorem. Firstly, it is straightforward to see that the $\delta_\calO$-terms agree from $\eps_p(Q,P)^2=e_p(Q,P)$ and $e_p(Q,P)^p=1$; the other terms cancel since we already know from the statement of Theorem \ref{realthm_non_rat} that $\Delta_{P,Q}^{*p}$ gives an element of $K\delta_\calO$.
\end{proof}

\begin{remark}
Specifying to the case $p=3$ and comparing the result from Theorem \ref{DeltaPQ_nice} to the expression in~\cite{Fisher-Newton}*{Proposition 2.14}, one finds that no further terms cancel in general. In this sense, our expression is the best possible. 
\end{remark}

On top of page 902 of~\cite{Explicit-n-descent-III-Algorithms} the authors explain that given $\alpha$, in practice it is often easy to determine $\rho$ such that the pair form compatible representatives, in such a way that determination of $\gamma$ is skipped. For this reason a formula in terms of just $\rho$ may be of interest. It is given in the theorem below. The computations in its proof are completely analogous to -- in fact contained in -- those in the proof of Theorem \ref{DeltaPQ_nice}.

\begin{thm}
In the setting as in the previous two theorems, we have
\small
\begin{displaymath}
\Delta_{P,Q}^{*p} = \sum_{\substack{i_1,i_2,\ldots,i_p\in\bbZ/p\bbZ\\ p|\sum_\ell i_\ell}}e_p(Q,P)^{\sum_{\ell=1}^p\ell\cdot i_\ell}\left(\prod_{j=1}^{p-1}\rho\left(jQ+\sum_{\ell=1}^j i_\ell P,Q+i_{j+1}P\right)\right)\delta_\calO.
\end{displaymath}
\end{thm}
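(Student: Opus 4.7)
The plan is to derive the formula directly from Theorem \ref{DeltaPQ_nice} by rewriting its $\gamma$-product as a product of $\rho$-values. Indeed, Theorem \ref{DeltaPQ_nice} already extracts the correct scalar $e_p(Q,P)^{\sum_\ell \ell\cdot i_\ell}$ and the correct index set $\{(i_1,\ldots,i_p):p\mid\sum_\ell i_\ell\}$, so only the internal product needs to be massaged.

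Fix a tuple $(i_1,\ldots,i_p)$ with $p \mid \sum_\ell i_\ell$, and set $T_\ell := Q + i_\ell P$ and $S_j := T_1 + \cdots + T_j$. Since $\rho = \partial\gamma$, i.e.\ $\rho(X,Y) = \gamma(X)\gamma(Y)/\gamma(X+Y)$, telescoping yields
\begin{displaymath}
\prod_{j=1}^{p-1}\rho(S_j,T_{j+1}) \;=\; \prod_{j=1}^{p-1}\frac{\gamma(S_j)\gamma(T_{j+1})}{\gamma(S_{j+1})} \;=\; \frac{\gamma(S_1)\prod_{\ell=2}^p \gamma(T_\ell)}{\gamma(S_p)} \;=\; \frac{\prod_{\ell=1}^p \gamma(Q+i_\ell P)}{\gamma(S_p)},
\end{displaymath}
using $S_1 = T_1 = Q+i_1 P$ in the last step. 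Since $pQ = \calO$, we have $S_p = (\sum_\ell i_\ell)P$, which equals $\calO$ under our divisibility hypothesis; condition (2) of Definition \ref{compatible} then gives $\gamma(S_p) = \gamma(\calO) = 1$. Hence
\begin{displaymath}
\prod_{j=1}^{p-1}\rho\Bigl(jQ + \sum_{\ell=1}^j i_\ell P,\; Q + i_{j+1}P\Bigr) \;=\; \prod_{\ell=1}^p \gamma(Q+i_\ell P),
\end{displaymath}
and plugging this into the expression from Theorem \ref{DeltaPQ_nice} yields the claim.

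This matches the author's comment that the computation is contained in the proof of Theorem \ref{DeltaPQ_nice}: a direct expansion of $\Delta_{P,Q}^{*p}$ via the definition of $*_\rho$ and iterated application of Proposition \ref{ind_mult}(\ref{ind_mult1}) first produces the $\rho$-formula, and only afterwards invokes $\rho = \partial\gamma$ to collapse to the $\gamma$-form. Consequently there is no substantive obstacle here; the only nontrivial point is that the normalization $\gamma(\calO) = 1$ is what makes the $\gamma(S_p)$ denominator disappear precisely on the index set $\{p \mid \sum_\ell i_\ell\}$.
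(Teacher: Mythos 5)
Your derivation is correct: the telescoping of $\prod_{j=1}^{p-1}\rho(S_j,T_{j+1})$ via $\rho=\partial\gamma$, together with $S_p=(\sum_\ell i_\ell)P=\calO$ and the normalization $\gamma(\calO)=1$ on the restricted index set, exactly identifies the $\rho$-product with $\prod_{\ell}\gamma(Q+i_\ell P)$, term by term against Theorem \ref{DeltaPQ_nice}. This is essentially the paper's route — the paper obtains the $\rho$-product as the intermediate stage of the direct expansion before ``writing $\rho$ in terms of $\gamma$ and cancelling factors,'' and your argument is that same cancellation read in reverse.
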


\normalsize
\subsection{About the norm}
At the end of their section 2, Fisher and Newton remark that in the situation where all $p$-torsion is defined over the base field one may choose $\gamma$ to satisfy the equality $\gamma(aP+bQ)=\gamma(P)^a\gamma(Q)^b$ for all $P,Q\in E[p](K)$ and all $a,b\in\{0,\ldots,p-1\}$, and they notice that in this way, their explicit formula for $p=3$ recovers the one given earlier by O'Neil~\cite{ONeil_Period-Index}*{Proposition 3.4} by the fact that the Hilbert norm residue symbol $\{a,b\}_K$ is trivial if $b$ is a norm in $K(a^{1/p})$. 

\begin{thm}\label{norm}
In the situation $E[p](\ol K)=E[p](K)$, choose $\gamma$ as described above and let $P,Q\in E[p](K)$ be generators. Write $M=K(\gamma(P))$. Then the following two equalities hold:
\begin{enumerate}
\item $\Norm_{M/K}\left(1+\gamma(P)+\ldots+\gamma(P)^{p-1}\right) =(1-\alpha(P))^{p-1}$, and \label{norm1}
\item $\Delta_{P,Q}^{*p} = \alpha(Q)(1-\alpha(P))^{p-1}.$\label{norm2}
\end{enumerate}
\end{thm}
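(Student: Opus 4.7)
The plan splits into essentially independent arguments for the two parts. For (2), I would substitute the multiplicative choice $\gamma(Q+iP)=\gamma(Q)\gamma(P)^i$ (for integer $i\in\{0,\ldots,p-1\}$) into Theorem \ref{DeltaPQ_nice} and recognize the resulting restricted sum as a polynomial generating function that is automatically a polynomial in $X^p$. For (1), the plan is a direct norm computation in the cyclic Kummer extension $M/K$ of degree $p$.

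For (2), after the substitution the product $\prod_\ell\gamma(Q+i_\ell P)$ factors as $\alpha(Q)\gamma(P)^{\sum_\ell i_\ell}$, so the formula reduces to evaluating
\[
S:=\sum_{\substack{(i_1,\ldots,i_p)\in\{0,\ldots,p-1\}^p\\ p\mid\sum_\ell i_\ell}}\zeta^{\sum_\ell \ell i_\ell}\gamma(P)^{\sum_\ell i_\ell},
\]
where $\zeta=e_p(Q,P)$ is a primitive $p$-th root of unity. Dropping the divisibility restriction and writing $X$ in place of $\gamma(P)$, I compute the full generating function as
\[
F(X)=\prod_{\ell=1}^p\sum_{i=0}^{p-1}(\zeta^\ell X)^i=\prod_{\ell=1}^p\frac{1-X^p}{1-\zeta^\ell X}=(1-X^p)^{p-1},
\]
using $\prod_{\ell=0}^{p-1}(1-\zeta^\ell X)=1-X^p$. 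Since $F(X)$ is a polynomial in $X^p$, only tuples with $p\mid\sum i_\ell$ actually contribute, so $S=F(\gamma(P))=(1-\alpha(P))^{p-1}$ and (2) follows.

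For (1), the hypothesis $E[p](\ol K)=E[p](K)$ combined with non-degeneracy of the Weil pairing forces $\mu_p\subset K$, so $\zeta\in K$. I focus on the generic case $\gamma(P)\notin K$, in which (1) is the substantive statement; note $X^p-\alpha(P)$ is then irreducible over $K$ (prime degree with a root in $\ol K$), so $M/K$ is cyclic of degree $p$ with $\Gal(M/K)=\langle\tau\rangle$ and $\tau(\gamma(P))=\zeta\gamma(P)$. Applying the geometric series identity at $Y=\zeta^i\gamma(P)$ writes each Galois conjugate of $1+\gamma(P)+\ldots+\gamma(P)^{p-1}$ as $(\alpha(P)-1)/(\zeta^i\gamma(P)-1)$, so the norm becomes
\[
\Norm_{M/K}\bigl(1+\gamma(P)+\ldots+\gamma(P)^{p-1}\bigr)=\frac{(\alpha(P)-1)^p}{\prod_{i=0}^{p-1}(\zeta^i\gamma(P)-1)}.
\]
Using $\prod_{i=0}^{p-1}(1-\zeta^i Y)=1-Y^p$ at $Y=\gamma(P)$ together with the fact that $p$ is odd, the denominator equals $\alpha(P)-1$, so the norm is $(\alpha(P)-1)^{p-1}=(1-\alpha(P))^{p-1}$ by the parity of $p-1$.

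The main conceptual content — more a pleasant observation than an obstacle — is that $F(X)$ is automatically a polynomial in $X^p$, which causes the divisibility restriction inherited from Theorem \ref{DeltaPQ_nice} to drop away for free. The only bookkeeping subtlety is consistently lifting each $i_\ell\in\bbZ/p\bbZ$ to an integer in $\{0,\ldots,p-1\}$ when applying the multiplicative formula for $\gamma$; after that both calculations are short and elementary.
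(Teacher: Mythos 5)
Your proof is correct, and the two parts relate to the paper differently. For part (1) you take essentially the paper's route: both arguments come down to $\Norm_{M/K}(1-\alpha(P))=(1-\alpha(P))^p$ and $\Norm_{M/K}(1-\gamma(P))=1-\alpha(P)$, the latter from the factorization $\prod_{i=0}^{p-1}(X-\zeta_p^i\gamma(P))=X^p-\alpha(P)$; your tacit restriction to the case $[M:K]=p$ matches an identical tacit assumption in the paper's proof, so it is not a gap relative to the source. For part (2), however, your route is genuinely different. The paper does not use Theorem \ref{DeltaPQ_nice} here at all: it computes $\Delta_{P,Q}^{*p}$ from scratch inside the noncommutative ring $\bbQ(\zeta_p)\langle X,Y\rangle/(YX-\zeta_p^2XY)$ via $X\mapsto\delta_P$, $Y\mapsto\delta_Q$, writing $\Delta_{P,Q}=\sum_i YX^i\zeta_p^{-i}$, pushing all copies of $Y$ to the left with $X^jY=\zeta_p^{-2j}YX^j$, and evaluating the resulting product of geometric sums. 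You instead specialize the already-established formula of Theorem \ref{DeltaPQ_nice} under the multiplicative choice of $\gamma$ and observe that the sum restricted to tuples with $p\mid\sum_\ell i_\ell$ coincides with the unrestricted generating function $F(X)=\prod_{\ell=1}^p\bigl((1-X^p)/(1-\zeta^\ell X)\bigr)=(1-X^p)^{p-1}$, because $F$ and the restricted sum both carry only exponents divisible by $p$ while their difference carries only exponents not divisible by $p$, hence vanishes. Both proofs ultimately evaluate the same cyclotomic product; yours buys an entirely commutative calculation and makes transparent why the divisibility condition in Theorem \ref{DeltaPQ_nice} costs nothing, at the price of depending on that theorem, whereas the paper's independent computation doubles as a consistency check on it. Your flagged subtlety about lifting each $i_\ell$ to $\{0,\ldots,p-1\}$ so that $\gamma(P)^{\sum_\ell i_\ell}$ is well defined and equals a power of $\alpha(P)$ on the relevant tuples is real and correctly handled.
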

\begin{proof}
We first prove \ref{norm1}. We have 
\begin{displaymath}
\Bigl(1+\gamma(P)+\cdots+\gamma^{p-1}(P)\Bigr)(1-\gamma(P)) = 1-\gamma^p(P) = 1-\alpha(P)
\end{displaymath}
and since the norm is multiplicative:
\begin{equation*}
\Norm_{M/K}\Bigl(1+\gamma(P)+\cdots+\gamma^{p-1}(P)\Bigr)=\frac{\Norm_{M/K}(1-\alpha(P))}{\Norm_{M/K}(1-\gamma(P))}.
\end{equation*} 
As $1-\alpha(P)$ is an element of $K$, its norm is just $(1-\alpha(P))^p$. The norm of $1-\gamma(P)$ is easily calculated from the equality 
\begin{equation*}
(X-\gamma(P))(X-\zeta_p\gamma(P))\cdots(X-\zeta_p^{p-1}\gamma(P))=X^p-\gamma^p(P)=X^p-\alpha(P)
\end{equation*}
where $\zeta_p$ is a primitive $p$th root of unity and substituting $X=1$. 

To prove the second statement, we compute in the non-commutative ring $\bbQ(\zeta_p)\langle X,Y\rangle/(YX-\zeta_p^2XY)$ which is a subring of $R_K$ after identification $X\mapsto \delta_P$, $Y\mapsto \delta_Q$ and $\zeta_p\mapsto \eps_p(Q,P)$. By our choice of $\gamma$, we may simplify the multiplication by using $\rho(Q,P)=\gamma(Q)\gamma(P)/\gamma(Q+P)=1$ and thus we need to prove the equality
\begin{displaymath}
\left(\sum_{i=0}^{p-1} YX^i\zeta_p^{-i}\right)^p = Y^p(1-X^p)^{p-1}.
\end{displaymath}
We remark that for every $j\in \{0,\ldots,p-1\}$ we have
\begin{displaymath}
X^jY=\zeta_p^{-2j}YX^j
\end{displaymath}
and hence
\begin{align*}
\left(\sum_{i=0}^{p-1} YX^i\zeta_p^{-i}\right)^p &=\left(Y\sum_{i=0}^{p-1}X^i\zeta_p^{-i}\right)^p\\
&= Y^p\prod_{j=0}^{p-1}\left(\sum_{i=0}^{p-1}X^i\zeta_p^{-(2j+1)i}\right)\\
&= Y^p\prod_{j=0}^{p-1}\left(\frac{1-X^p\left(\zeta_p^{-(2j+1)}\right)^p}{1-X\zeta_p^{-(2j+1)}}\right)\\
&= Y^p(1-X^p)^p\prod_{j=0}^{p-1}\left(1-X\zeta_p^{-(2j+1)}\right)^{-1}.
\end{align*}
We finish the argument by remarking that $2j+1$ runs through all of $\bbZ/p\bbZ$ when $j$ does since $\gcd(2,p)=1$ holds. Therefore the last product equals
\begin{displaymath}
\prod_{j=0}^{p-1}\left(1-X\zeta_p^j\right)^{-1} = \left(1-X^p\right)^{-1}
\end{displaymath}
and we have arrived at the desired equality.
\end{proof}

Combining the two parts of this theorem shows us that in the situation described above, just like the case $p=3$, the expressions from Theorems \ref{realthm_non_rat} and \ref{DeltaPQ_nice} recover the formula from~\cite{ONeil_Period-Index}*{Proposition 3.4}.

\begin{remark}
At the time that the author wrote his master's thesis~\cite{mscthesis}, little effort had been made to find a complete proof of part \ref{norm2} of Theorem \ref{norm}; the statement is contained in the thesis as a conjecture.
\end{remark}


\begin{bibdiv}
\begin{biblist}

\bib{Cassels-original}{article}{
author = {Cassels, J.W.S.},
title = {Arithmetic on curves of genus 1, IV. Proof of the Hauptvermutung},
year = {1959},
journal = {J. Reine Angew. Math.},
volume = {202},
pages = {95-112},
}

\bib{Cassels-p=2}{article}{
author = {Cassels, J.W.S.},
title = {Second descent for elliptic curves},
year = {1998},
journal = {J. Reine Angew. Math.},
volume = {494},
pages = {101-127},
}

\bib{Fisher-Newton}{article}{
author = {Fisher, T.A.},
author = {Newton, R.D.},
title = {Computing the Cassels--Tate pairing on the 3-Selmer group of an elliptic curve},
year = {2014},
journal = {Int. J. Number Theory},
volume = {10},
number = {7},
pages = {1881-1907},
}

\bib{Newton_tBM}{article}{
author = {Newton, R.D.},
title = {Transcendental Brauer groups of products of CM elliptic curves},
year = {2016},
journal = {J. London Math. Soc.},
volume = {93},
number = {2},
pages = {397-419},
}

\bib{Ieronymou-Skorobogatov_odd_order}{article}{
author = {Ieronymou, E.},
author = {Skorobogatov, A.N.},
title = {Odd order Brauer--Manin obstruction on diagonal quartic surfaces},
year = {2015},
journal = {Adv. Math.},
volume = {270},
pages = {181-205},
note = {Corrigendum: \url{http://wwwf.imperial.ac.uk/~anskor/diag_corrigendum.pdf}},
}

\bib{Explicit-n-descent-I-Algebra}{article}{
author = {Cremona, J.E.},
author = {Fisher, T.A.},
author = {O'Neil, C.},
author = {Simon, D.},
author = {Stoll, M.},
title = {Explicit $n$-descent on elliptic curves, I. Algebra},
year = {2008},
journal = {J. Reine Angew. Math.},
volume = {615},
pages = {121-155},
}

\bib{Explicit-n-descent-III-Algorithms}{article}{
author = {Cremona, J.E.},
author = {Fisher, T.A.},
author = {O'Neil, C.},
author = {Simon, D.},
author = {Stoll, M.},
title = {Explicit $n$-descent on elliptic curves, III. Algorithms},
year = {2015},
journal = {Math. Comp.},
volume = {84},
number = {292},
pages = {895-922},
}

\bib{ONeil_Period-Index}{article}{
author = {O'Neil, C.},
title = {The Period-Index Obstruction for Elliptic Curves},
year = {2002},
journal = {J. Number Theory},
volume = {95},
number = {2},
pages = {329-339},
}

\bib{Poonen-Stoll}{article}{
author = {Poonen, B.M.},
author = {Stoll, M.},
title = {The {C}assels-{T}ate pairing on polarized abelian varieties},
year = {1999},
journal = {Ann. of Math.},
volume = {150},
pages = {1109-1149},
}

\bib{mscthesis}{misc}{
author = {Visse, H.D.},
title = {Local computations on the Cassels--Tate pairing on an elliptic curve},
note = {master's thesis available at \url{http://pub.math.leidenuniv.nl/~vissehd/documents/master_thesis.pdf}},
year = {2014},
}

\end{biblist}
\end{bibdiv}

\end{document}